\newcommand{\Log}{\mathrm{Log}\,}
\newtheorem{theorem}{Theorem}[section]
\newtheorem{proposition}{Proposition}[section]
\newtheorem{lemma}{Lemma}[section]
\begin{document}
\title[Amoeba  finite basis of solutions to a system of $n$ polynomials]{Amoeba finite basis of solutions to a system of $n$ polynomials in $n$ variables}
\author{Mounir Nisse}
\date{}
 
\address{School of Mathematics KIAS, 87 Hoegiro Dongdaemun-gu, Seoul 130-722, South Korea.}
\email{\href{mailto:mounir.nisse@gmail.com}{mounir.nisse@gmail.com}}
\thanks{This research  is partially supported by NSF MPS grant DMS-1001615, and Korea Institute for Advanced Study (KIAS), Seoul, South Korea.}
%%%%%%%%%%%%%%%%%%%%%%%%%%%%%%%%%%%%%%%%%%%%%%%%%%%%%%%%%%%%%%%%%%%%%%%%%%%%
\subjclass{14T05, 32A60}
\keywords{Amoebas, hyperplane amoebas, and amoeba basis}

\begin{abstract}
We show that  the amoeba of a complex algebraic variety defined as
the solutions to a generic system of $n$ polynomials in $n$ variables has a finite basis. In other words, it is the intersection of finitely many hypersurface amoebas.  Moreover, we give an upper bound of the size of the basis in terms of $n$ and the mixed volume $\mu$ of the Newton polytopes of the polynomial equations of the system. Also, we give an upper bound of the degree of the basis elements in terms of $\mu$.
\end{abstract}

\maketitle

%\setcounter{tocdepth}{1} \tableofcontents

%%%%%%%%%%%%%%%%%%%%%%%%%%%%%
\section{Introduction}
%%%%%%%%%%%%%%%%%%%%%

 Let  $K$ be
an algebraically closed field  equipped with a non-trivial real valuation $\nu : K \rightarrow \mathbb{R} \cup \{ \infty \}$, the tropical variety (or non-Archimedian amoebas) $\mathcal{T}rop (\mathcal{I})$ of an ideal $\mathcal{I}\subset K[x_1,\ldots, x_n]$ is defined as the topological closure of the set
$$
\nu (V(\mathcal{I})):= \{(\nu (x_1), \ldots , \nu (x_n))\,|\, (x_1,\ldots ,x_n)\in  V(\mathcal{I})\} \subset \mathbb{R}^n,
$$
where $V(\mathcal{I})$ denotes the zero set of $\mathcal{I}$ in $(K^*)^n$ (see for example  \cite{MS-09}).
A tropical basis for $\mathcal{I}$ is a generating set $\mathscr{B} = \{ g_1,\ldots , g_l\}$ of $\mathcal{I}$ such that
$$
 \mathcal{T}rop (\mathcal{I}) = \bigcap_{j=1}^l \mathcal{T}rop (\mathcal{I}_{g_j}),
$$
 where $\mathcal{I}_{g_j}$ denotes the  principal ideal generated by the  polynomial $g_j$. 
Bogart, Jensen, Speyer, Sturmfels, and Thomas \cite{BGSS-07} gave a lower bounds on the size of such bases  when $K$ is the field of Puiseux series $\mathbb{C}((t))$ and the ideal  $\mathcal{I}$ is linear with constant coefficients.  In \cite{HT-09}, Hept and Theobald showed  that any tropical variety has a tropical basis.
 The Archimedean amoeba of a subvariety of the complex torus $(\mathbb{C}^*)^n$
 is its image under the coordinatewise logarithm map. Archimedean Amoebas were introduced by Gelfand, Kapranov, and Zelevinsky in 1994 \cite{GKZ-94}. The purpose of this Note is to show that Held and Theobald's above theorem has an analogue  for Archimedian amoebas of codimension $n$. 

\vspace{0.3cm}

An algebraic variety $V\subset (\mathbb{C}^*)^n$ of codimension $n$ which is the solutions of a system of $n$ polynomial equations
 is  {\em generic} if   the Jacobian of the logarithmic map restricted to  all points of $V$   has maximal rank  i.e.,  equal to $n$.
 If the defining ideal $\mathcal{I}(V)$ of $V$ is generated by the set of polynomials  $\{ g_i\}_{i=1}^l$ with the following properties:
%
%
%
%
%

%\newpage

\begin{itemize}
\item[(i)]\,  $\mathscr{A}(V) = \bigcap_{i=1}^l \mathscr{A}(V_{g_i})$;
\item[(ii)]\, $\mathscr{A}(V)\subsetneq \bigcap_{i\in \{ 1,\ldots ,l\}\setminus s} \mathscr{A}(V_{g_i})$ for every $1\leq s\leq l$, \end{itemize}
then we said that $\{ g_i\}_{i=1}^l$ is an {\em amoeba basis} of $\mathscr{A}(V)$.

It was shown in \cite{N-14} that   the amoeba of a generic complex algebraic variety of codimension $1<r<n$ does not have a finite basis. In other words, it is not the intersection of finitely many hypersurface amoebas.
The aim of this note is to show that if the codimension of our variety $V$ is equal to $n$ (i.e., $V$ is a finite number of points),
then its amoeba has a finite basis. In other words,  the main  result of Hept and Theobald  in \cite{HT-09} does have an analogue for  Archimedean amoebas of generic  complex algebraic varieties  of dimension zero. 

\begin {theorem}\label{MainResult}
The amoeba of an algebraic variety $V\subset (\mathbb{C}^*)^n$ defined as the  solutions to a generic  system of $n$ polynomials $f_1,\ldots ,f_n$ in $n$ complex variables has a finite basis of length at most $(n+1)(n^{\mu -1} +1)$ where $\mu$ is the mixed volume of the Newton polytopes of the polynomials $f_1,\ldots ,f_n$. Moreover,   each element of this basis has degree at most $\mu$.
%$n+1+ {l(n+1) \choose n}$ where $l$ is equal to the number of points in $V$.
\end{theorem}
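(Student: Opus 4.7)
The first step is to note that the genericity assumption together with the Bernstein--Kushnirenko theorem forces $V=V(f_1)\cap\cdots\cap V(f_n)\subset(\C^*)^n$ to consist of exactly $\mu$ isolated points, and the maximal-rank hypothesis on the Jacobian of the logarithmic map makes the $\mu$ images $\Log|p_j|$ pairwise distinct in $\R^n$. Hence $\A(V)$ is a finite set of $\mu$ points. Because every $g\in\mathcal{I}(V)$ satisfies $V\subset V_g$ and therefore $\A(V_g)\supset\A(V)$ automatically, the basis problem reduces to producing a finite list $g_1,\dots,g_l\in\mathcal{I}(V)$ such that every $q\in\R^n\setminus\A(V)$ is excluded from at least one $\A(V_{g_i})$, plus the minimality condition (ii) of the basis definition.

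The natural construction is to take the $g_i$'s to be polynomials in $\mathcal{I}(V)$ whose Newton polytope is an $n$-simplex, i.e.\ whose support consists of exactly $n+1$ affinely independent lattice points. These are the hyperplane-type polynomials suggested by the keywords: after a monomial change of coordinates each such $g$ becomes affine-linear, so $\A(V_g)$ is a hyperplane amoeba whose complement has exactly $n+1$ unbounded components, one for each vertex of the simplex. The existence of such a $g\in\mathcal{I}(V)$ of degree at most $\mu$ with a prescribed simplex support follows by a dimension count: vanishing on the $\mu$ points of $V$ imposes only $\mu$ linear conditions on the coefficients, so as soon as the support contains more than $\mu$ monomials — which can be arranged within degree $\mu$ — the linear system has a nonzero solution. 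The degree bound in the theorem comes precisely from this count.

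To bound the number of basis elements, the plan is to stratify $\R^n\setminus\A(V)$ into finitely many separation classes according to which unbounded component of a candidate hyperplane amoeba a test point $q$ would lie in, relative to the $\mu$ points of $\A(V)$. The factor $n+1$ corresponds to the $n+1$ choices of apex vertex of the simplex, i.e.\ of the unbounded complement component into which $q$ is pushed, while the factor $n^{\mu-1}+1$ arises from a combinatorial branching that separates the $\mu$ points of $\A(V)$ in $\mu-1$ successive steps, each step splitting the remaining configuration along one of $n$ coordinate directions. Once the stratification is established, a single simplex polynomial per class suffices, and the total count is at most $(n+1)(n^{\mu-1}+1)$.

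The main obstacle will be the tight counting. A soft Noetherian or compactness argument applied to the open cover $\{\R^n\setminus\A(V_g)\}_{g\in\mathcal{I}(V)}$ of $\R^n\setminus\A(V)$ yields some finite bound but not the stated one; extracting $(n+1)(n^{\mu-1}+1)$ demands a precise combinatorial accounting of how the $n+1$ unbounded components of each hyperplane amoeba can be simultaneously steered to avoid each of the $\mu$ points of $\A(V)$. I expect this step to proceed by induction on $\mu$, showing that adding one additional point of $\A(V)$ multiplies the required number of simplex polynomials by at most $n$, so that after the base case $\mu=1$ the exponent $\mu-1$ and the leading factor $n+1$ emerge naturally from the induction.
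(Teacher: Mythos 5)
There is a genuine gap: your outline identifies the right reduction (the amoeba is a finite set of $\mu$ points and one must exclude every $q\notin\A(V)$ by some member of the family), but it never supplies the mechanism that actually does the excluding, which is the heart of the paper's argument. The paper does not obtain its hyperplane-type polynomials by a dimension count on a prescribed simplex support; it writes them down explicitly, adapted to each solution point: for every $v^{(i)}\in V$ it takes the $n+1$ affine-linear forms $g_0^{(i)},\ldots,g_n^{(i)}$ whose coefficients are the phases $|v^{(i)}_k|/v^{(i)}_k$ (suitably normalized by $\|v^{(i)}\|_0$), chosen precisely so that, by the Forsberg--Passare--Tsikh inequalities describing a hyperplane amoeba, any point lying in \emph{all} $n+1$ amoebas $\A(V_{g_j^{(i)}})$ must equal $w^{(i)}=\Log |v^{(i)}|$. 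The basis elements are then the products $g_{j(1)}^{(1)}\cdots g_{j(l)}^{(l)}$, one factor per solution point, so that each element vanishes on all of $V$ and has degree $l\le\mu$, while its amoeba is an arrangement of $l$ hyperplane amoebas; the count $(n+1)(n^{\mu-1}+1)$ comes from enumerating these products together with the $n+1$ polynomials $g_j$. In your proposal the pinning property (a point in the right finite collection of hyperplane amoebas must be one of the $w^{(i)}$'s) is exactly the step you defer to an unproved induction (``I expect this step to proceed by induction on $\mu$''), so the separation of $\R^n\setminus\A(V)$, and with it the stated bound, is not established.

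Two further concrete problems. First, your existence argument for simplex-support elements of $\mathcal{I}(V)$ fails for $\mu\ge n+1$: a support consisting of exactly $n+1$ affinely independent lattice points gives only $n+1$ coefficients, so the $\mu$ linear conditions of vanishing at the points of $V$ need not admit a nonzero solution; and if you enlarge the support beyond $n+1$ monomials to make the dimension count work, the amoeba is no longer a hyperplane amoeba and your count of $n+1$ unbounded complement components (the source of your factor $n+1$) no longer applies. This is why the paper uses products of linear factors rather than single sparse polynomials. Second, even granting existence, a polynomial produced by a dimension count carries no control over the position of its amoeba beyond containing $\A(V)$, so there is no reason a bounded number of them separates an arbitrary $q\notin\A(V)$; the soft compactness fallback you mention also does not obviously apply, since $\R^n\setminus\A(V)$ is not compact. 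To repair the argument along the paper's lines you would need the explicit phase-adapted linear forms and the FPT description (Proposition~\ref{FPT} and Lemma~\ref{lemmaA}), after which the combinatorial count of the products is elementary.
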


In Theorem \ref{MainResult}, if $\mu >1$ then we can decrease the length of the basis, but the price will be to increase the degree of its elements. If $\mu =1$ (i.e., $f_1,\ldots ,f_n$ is a system of a generic first order polynomials), then the length of the basis is equal to $n+1$.

%\newpage

%%%%%%%%%%%%%%%%%%%%%%%%%%%%%
\section{Proof of the main result}
%%%%%%%%%%%%%%%%%%%%%

%Before starting the proof of our main theorem,
 Let us recall the description of  hyperplane amoebas given by Forsberg, Passare, and  Tsikh  \cite{FPT-00}  and some of their feature
 that we will use in our proof. In Corollary 4.3 \cite{FPT-00}, Forsberg, Passare, and  Tsikh showed the following:

\begin{proposition}\label{FPT}
The amoeba $\mathscr{A}(V_f)$ of an affine complex function $f(z) = b_0+b_1z_1+\ldots +b_nz_n$ is equal to the closed (possibly empty) subset in $\mathbb{R}^n$ defined by the inequalities
\begin{eqnarray}
\log |b_0| & \leq & \log (\sum_{k=1}^n|b_k|e^{u_k})     \nonumber \\
u_j + \log |b_j| & \leq &  \log ( |b_0| + \sum_{k\ne j}^n|b_k|e^{u_k}).  \nonumber
\end{eqnarray}
\end{proposition}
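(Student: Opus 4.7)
The plan is to unwind the definition of the amoeba in this affine case and recognise the defining inequalities as the classical polygon (generalised triangle) inequality, written in additive form after taking logarithms. By definition, a point $u=(u_1,\ldots,u_n)\in\mathbb{R}^n$ lies in $\mathscr{A}(V_f)$ if and only if there exist $z_1,\ldots,z_n\in\mathbb{C}^*$ with $\log|z_j|=u_j$ and $f(z)=0$. Writing $z_j=e^{u_j}\theta_j$ with $|\theta_j|=1$, the equation $f(z)=0$ becomes the existence of unit complex numbers $\theta_1,\ldots,\theta_n$ satisfying $-b_0=\sum_{k=1}^n b_k e^{u_k}\theta_k$.

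Next I would set $r_k:=|b_k|e^{u_k}$ and reformulate membership in the amoeba as $|b_0|\in S(u)$, where
$$
S(u)\;:=\;\Bigl\{\,\Bigl|\sum_{k=1}^n b_k e^{u_k}\theta_k\Bigr|\ :\ |\theta_k|=1\Bigr\}.
$$
After absorbing $\arg b_k$ into $\theta_k$, the set $S(u)$ is exactly the set of moduli of points in the Minkowski sum of the $n$ origin-centred circles of radii $r_1,\ldots,r_n$. The polygon-closure lemma then identifies $S(u)$ with the closed interval $\bigl[\max(0,\,2\max_j r_j-\sum_k r_k),\;\sum_k r_k\bigr]$, so $|b_0|\in S(u)$ is equivalent to $|b_0|\leq\sum_k r_k$ together with $r_j\leq|b_0|+\sum_{k\neq j}r_k$ for every $j$. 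Substituting $r_k=|b_k|e^{u_k}$ and taking $\log$ of both sides yields exactly the two families of inequalities displayed in the statement. The ``closed (possibly empty)'' clause corresponds to the case in which the interval above is empty, which is ruled out precisely when the inequalities are jointly satisfiable.

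The only genuinely substantive step, and hence the main obstacle, is the polygon-closure lemma: every $\rho$ in $\bigl[\max(0,\,2\max_j r_j-\sum_k r_k),\;\sum_k r_k\bigr]$ is realised as $|\sum_k r_k\theta_k|$ for some choice of unit $\theta_k$. I would prove it by induction on $n$: the base case $n=2$ is the ordinary triangle inequality $|r_1-r_2|\leq\rho\leq r_1+r_2$; for the inductive step one picks $\theta_1$ so that $\rho-r_1\theta_1$ has modulus in the admissible range for the remaining $n-1$ radii, which is possible by continuity because $|\rho-r_1\theta_1|$ sweeps the interval $[|\rho-r_1|,\,\rho+r_1]$ as $\theta_1$ traverses the unit circle. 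Apart from this, everything is bookkeeping: the forward direction (membership in the amoeba implies the inequalities) is just the triangle inequality applied to the relation $-b_0=\sum_k b_k e^{u_k}\theta_k$ and to each of its rearrangements isolating the $j$-th term.
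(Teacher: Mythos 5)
Your argument is correct. Note that the paper itself offers no proof of this proposition: it is quoted verbatim from Corollary 4.3 of Forsberg--Passare--Tsikh \cite{FPT-00}, so there is nothing internal to compare against. Your reduction --- write $z_k=e^{u_k}\theta_k$, absorb the phases of the $b_k$, and observe that $u\in\mathscr{A}(V_f)$ iff $|b_0|$ lies in the set of moduli realised by $\sum_k r_k\theta_k$ with $r_k=|b_k|e^{u_k}$, which by the polygon-closure lemma is the annulus/interval $\bigl[\max(0,\,2\max_j r_j-\sum_k r_k),\ \sum_k r_k\bigr]$ --- is exactly the route taken in that reference (their ``generalized triangle inequality''), so you have in effect reconstructed the omitted proof rather than found a new one. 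The only point you gloss over is the inductive step of the polygon-closure lemma: continuity gives you that $|\rho-r_1\theta_1|$ sweeps $[\,|\rho-r_1|,\ \rho+r_1\,]$, but you must still check that this interval meets the admissible interval for the remaining radii $r_2,\ldots,r_n$; this does follow from the hypotheses ($\rho-r_1\le\sum_{k\ge2}r_k$ and $r_1-\rho\le\sum_{k\ge2}r_k$ are two of the assumed polygon inequalities, and $2r_{j_0}-\sum_{k\ge2}r_k\le\rho+r_1$ is the inequality for the index $j_0$ realising $\max_{j\ge2}r_j$), but it deserves a line. It is also worth a sentence that the degenerate cases $b_k=0$ are handled by the convention $\log 0=-\infty$, which is where the ``possibly empty'' clause comes from.
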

\vspace{0.2cm}

\noindent Consider a generic system of $n$ polynomial equations in $n$ complex variables $z=(z_1,\ldots ,z_n)$

\vspace{0.05cm}
$$
f_1(z_1,\ldots ,z_n) = f_2(z_1,\ldots ,z_n) = \cdots = f_n(z_1,\ldots ,z_n) = 0. \quad \quad \quad (\mathscr{S})
$$
\vspace{0.05cm}

By   Bernstein's  Theorem \cite{B-75},  we know that if $\Delta_j$ is the Newton polytope of $f_j$ for $j=1, \ldots ,n$, then the number of  complex solutions of $(\mathscr{S})$ counted with multiplicity is equal to the mixed volume $\mu (\Delta_1,\ldots ,\Delta_n)$ of the $\Delta_j$'s. Let $
V := \{v^{(1)}, v^{(2)}, \ldots , v^{(l)} \}$ be the set of complex solutions of $(\mathscr{S})$, with
 $$
 \sum_{i=1}^l mult(v^{(i)}) =\mu (\Delta_1,\ldots ,\Delta_n),
 $$
 where $mult(v^{(i)})$ denotes the multiplicity of the solution $v^{(i)}$.
  Let $\mathscr{A}(V) := \{w^{(1)}, w^{(2)}, \ldots , w^{(l)} \}$ be the amoeba of $V$. If $v^{(i)}$ is equal to 
$(v^{(i)}_1,\ldots , v^{(i)}_n)\in (\mathbb{C}^*)^n$, then we denote by 
$||v^{(i)}||_0$ the  following sum:
$$
||v^{(i)}||_0 = \sum_{k=1}^n|v^{(i)}_k|.
$$

Consider the following system of $(n+1)$ polynomial equations of $n$ variables $z =(z_1,\ldots ,z_n)$, which we denote by $(\mathscr{G})$:
{\setlength\arraycolsep{2pt}
\begin{eqnarray}
g_0(z) & = & \prod_{i=1}^l \left(1- \frac{1}{||v^{(i)}||_0}\sum_{k=1}^n \frac{|v^{(i)}_k|}{v^{(i)}_k}z_k \right) =0,
                    \nonumber\\
 g_j(z) &=& \prod_{i=1}^l \left( 1 - \frac{1+||v^{(i)}||_0- |v_j^{(i)}|}{v_j^{(i)}} z_j +   \sum_{k\in\{ 1,\ldots ,n\}\setminus j} \frac{|v^{(i)}_k|}{v^{(i)}_k} z_k \right) =0,              \nonumber
\end{eqnarray}}

\noindent for $ j=1,\ldots n. $
\noindent We can check that the amoeba of each polynomial $g_j$ for $j=0,1,\ldots ,n$ is an arrangement  of $l$ hyperplane amoebas (counted with multiplicity, because it can happen that $v^{(i)}\ne v^{(m)}$ but $||v^{(i)}||_0 = ||v^{(m)}||_0$ or  $v_j^{(i)} = v_j^{(m)}$ for some $j$).
%%%%%%%
For $s=1,\ldots , l$, we denote by $\mathscr{H}_j^{(i)}$ the hyperplane with defining polynomial  $g_j^{(i)}$ which is the $i^{th}$ factor of $g_j$ for $j=0, 1,\ldots ,n$, i.e.,

$$
g_0^{(i)}(z)=1- \frac{1}{||v^{(i)}||_0}\sum_{k=1}^n \frac{|v^{(i)}_k|}{v^{(i)}_k}z_k,  \quad \textrm{for}\,\, i=1,\ldots ,l
$$
 and
$$
g_j^{(i)}(z) = 1 - \frac{1+||v^{(i)}||_0- |v_j^{(i)}|}{v_j^{(i)}} z_j +   \sum_{k\in\{ 1,\ldots ,n\}\setminus j} \frac{|v^{(i)}_k|}{v^{(i)}_k} z_k, 
$$
for  $i=1,\ldots ,l$  and  $j=1,\ldots , n$. We denote by $\mathscr{A}(V_{g_j^{(i)}})$ the hyperplane amoeba defined by $g_j^{(i)}$.

\vspace{0.2cm}

\noindent By construction, each $v^{(i)}$ is a solution of the system $(\mathscr{G})$, and then $\mathscr{A}(V) \subset \bigcap_{j=0}^n\mathscr{A}(V_{g_j})$, where $\mathscr{A}(V_{g_j})$ denotes the arrangement of hypersurface amoebas defined by the polynomial $g_j$. 
We can rewrite the system $(\mathscr{G})$ as follows
 \begin{displaymath}
 \begin{array}{cccccccc}
g_0(z) & = & g_0^{(1)}(z)&g_0^{(2)}(z)&\ldots& g_0^{(l)}(z)&=&0\\
g_1(z) & = & g_1^{(1)}(z)&g_1^{(2)}(z)&\ldots& g_1^{(l)(z)}&=&0\\
 \vdots&\vdots&\vdots&\vdots&\vdots&\vdots&\vdots&\vdots\\
 g_n(z) &=&  g_n^{(1)}(z)&g_n^{(2)}(z)&\ldots& g_n^{(l)}(z)&=&0.
\end{array} 
\end{displaymath}
Let $w\in \bigcap_{j=0}^n\mathscr{A}(V_{g_j})$  and $u\in (\mathbb{C}^*)^n$ such that $\Log (u) = w$. As we know that the amoeba defined by $g_j$ is the union of the hyperplane amoebas $\mathscr{A}(\mathscr{H}_j^{(i)})$ for $i=1,\ldots , l$, then for any $j\in\{ 0,\ldots ,n\}$ there exists $i\in \{ 1,\ldots ,l\}$ such that $w\in \mathscr{A}(\mathscr{H}_j^{(i)})$. 

\vspace{0.2cm}

\begin{lemma}\label{lemmaA}
Let $w$ be a point in the complement of the amoeba $\mathscr{A}(V)$. Then for any $i\in\{ 1,\ldots ,l\}$ there exists a nonempty subset $S_i\subset\{ 0,1,\ldots ,n\}$ such that $w\notin  \mathscr{A}(V_{g_j^{(i)}})$ for all $j\in S_i$.
\end{lemma}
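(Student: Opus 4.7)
The plan is to prove the contrapositive: if for some fixed index $i$ the point $w$ lies in $\mathscr{A}(V_{g_j^{(i)}})$ for every $j\in\{0,1,\ldots,n\}$, then necessarily $w=\Log v^{(i)}=w^{(i)}$, and in particular $w\in\mathscr{A}(V)$. The key translation uses Proposition~\ref{FPT}. Setting $t_k:=e^{w_k}$, $a_k:=|v^{(i)}_k|$, $A:=||v^{(i)}||_0=\sum_k a_k$, and $\beta_j:=(1+A-a_j)/a_j$, the condition $w\in\mathscr{A}(V_{g_0^{(i)}})$ is equivalent to the $(n+1)$-tuple $(A,t_1,\ldots,t_n)$ satisfying the generalized triangle inequality, while $w\in\mathscr{A}(V_{g_j^{(i)}})$ for $j\geq 1$ is equivalent to the analogous inequality for the tuple $(1,\beta_j t_j,(t_k)_{k\neq j})$; that is, each entry is bounded by the sum of the others.

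From these I extract two crucial inequalities. Writing $T:=\sum_{k=1}^n t_k$, the condition for $g_0^{(i)}$ gives the lower bound $T\geq A$. The triangle bound applied to the $\beta_j t_j$ term in the condition for $g_j^{(i)}$ reads $\beta_j t_j\leq 1+T-t_j$, and since $\beta_j+1=(1+A)/a_j$, this rearranges to $t_j\leq (1+T)\,a_j/(1+A)$. Summing over $j=1,\ldots,n$ yields $T\leq (1+T)A/(1+A)$, which simplifies to $T\leq A$. Combined with the previous lower bound this forces $T=A$, whence equality must hold in each of the estimates above, giving $t_j=a_j$ for every $j$. Therefore $w_j=\log a_j=\log|v^{(i)}_j|$ for every $j$, i.e., $w=\Log v^{(i)}$, completing the contrapositive.

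Finally, since by hypothesis $w\notin\mathscr{A}(V)$, in particular $w\neq w^{(i)}$ for every $i$; so for each $i$ the set
$$
S_i:=\{\, j\in\{0,1,\ldots,n\}\,:\,w\notin\mathscr{A}(V_{g_j^{(i)}})\,\}
$$
is nonempty, which is exactly the desired conclusion. I do not expect a major obstacle; the only nontrivial step is the clean algebraic derivation of $T\leq A$ by summing the triangle bounds arising from $g_1^{(i)},\ldots,g_n^{(i)}$, which works precisely because of the specific normalising coefficients chosen in the definition of the $g_j^{(i)}$ (it is exactly the role of the extra $+1$ appearing in $1+A-a_j$ to make the $(n+1)$ hyperplane conditions pin down the single point $w^{(i)}$).
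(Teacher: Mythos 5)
Your proof is correct and follows the same route as the paper: the paper's argument is exactly the contrapositive claim that membership in all $n+1$ hyperplane amoebas $\mathscr{A}(V_{g_j^{(i)}})$ forces $w=w^{(i)}$ via Proposition~\ref{FPT}, which the paper merely asserts ``can be checked'' (citing \cite{SW-13}). You supply precisely that check, and your algebra (the bounds $T\geq A$ from $g_0^{(i)}$ and $t_j\leq(1+T)a_j/(1+A)$ from $g_j^{(i)}$, summing to force $T=A$ and then $t_j=a_j$) is a valid and welcome filling-in of the detail the paper omits.
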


\begin{proof}
If there exists $i\in\{ 1,\ldots ,l\}$ with $w\in  \mathscr{A}(V_{g_j^{(i)}})$ for all $j\in\{ 0,1,\ldots ,n\}$, then using Proposition \ref{FPT} we can check that $w=w^{(i)}$ (see \cite{SW-13} for more details).
\end{proof}

\begin{lemma}\label{lemmaB}
%Let $w\in \bigcap_{j=0}^n\mathscr{A}(V_{g_j})$  and assume that $w$ is contained in the complement of the amoeba $\mathscr{A}(V)$. 
With the above notation, there exist a finite number of polynomial equations  $\{ h_s\}_{s=1}^r$ of degree $l$ such that:
\begin{itemize}
\item[(i)]\,  The set of solutions of the system $\{ g_j(z)= h_s(z)=0\}$ for $j=0,1,\ldots ,n$ and $s=1,\ldots ,r$ is equal to  $V$;
\item[(ii)]\, The intersection $ (\bigcap_{j=0}^n\mathscr{A}(V_{g_j}))  \cap \{ \mathscr{A}(V_{h_s})\}_{s=1}^r$ is precisely $\mathscr{A}(V) = \{ w^{(1)}, \ldots , w^{(l)}\}$.
\end{itemize}
\end{lemma}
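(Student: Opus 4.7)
I will build the auxiliary polynomials as \emph{transversals} through the factorizations of $g_0,g_1,\ldots,g_n$. For each function $\sigma\colon\{1,\ldots,l\}\to\{0,1,\ldots,n\}$ set
\[
h_\sigma(z)\;:=\;\prod_{i=1}^{l} g_{\sigma(i)}^{(i)}(z).
\]
Each $h_\sigma$ is a polynomial of degree $l$ (it is a product of $l$ affine linear forms), and because $g_{\sigma(i)}^{(i)}(v^{(i)})=0$ for every $i$, each $v^{(i)}$ is a zero of $h_\sigma$. Enumerating all such $\sigma$ gives a finite family $\{h_s\}_{s=1}^{r}$ with $r=(n+1)^l$. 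Observe that the constant choice $\sigma\equiv j$ recovers $g_j$ itself, so in particular each $g_j$ already appears among the $h_s$.

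\textbf{Proof of (i).} Given any $u\in(\mathbb{C}^*)^n$ lying in the common zero locus of all $g_j$ and all $h_s$, introduce the incidence set
\[
B(u)\;:=\;\bigl\{(j,i)\in\{0,\ldots,n\}\times\{1,\ldots,l\}\,:\,g_{j}^{(i)}(u)=0\bigr\}.
\]
Since $h_\sigma=\prod_i g_{\sigma(i)}^{(i)}$, the vanishing $h_\sigma(u)=0$ amounts to the graph of $\sigma$ meeting $B(u)$. Requiring this for \emph{every} $\sigma$ is, by a direct K\"onig-type choice argument (pick $\sigma(i)$ in the complement of the $i$-th column of $B(u)$ whenever that complement is non-empty), equivalent to the statement that some column of $B(u)$ is \emph{full}: there exists $i_0$ with $g_{j}^{(i_0)}(u)=0$ for every $j=0,1,\ldots,n$. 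It remains to verify that already the $n$ affine equations $g_{1}^{(i_0)}=\cdots=g_{n}^{(i_0)}=0$ determine $u=v^{(i_0)}$. Rescaling column $k$ by $v_{k}^{(i_0)}/|v_{k}^{(i_0)}|$ reduces the matrix of linear parts to the form $J+D$, where $J$ is the $n\times n$ all-ones matrix and
\[
D\;=\;\mathrm{diag}\bigl(-(1+||v^{(i_0)}||_0)/|v_{j}^{(i_0)}|\bigr)_{j=1,\ldots,n};
\]
the matrix determinant lemma then gives $\det(J+D)=(-1)^{n}(1+||v^{(i_0)}||_0)^{n-1}/\prod_{j}|v_{j}^{(i_0)}|\neq 0$. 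Hence $u=v^{(i_0)}\in V$.

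\textbf{Proof of (ii).} The inclusion $\mathscr{A}(V)\subset\bigl(\bigcap_{j=0}^{n}\mathscr{A}(V_{g_j})\bigr)\cap\bigcap_{s=1}^{r}\mathscr{A}(V_{h_s})$ is immediate, because each $v^{(i)}$ lies in every $V_{g_j}$ and every $V_{h_s}$. For the reverse, suppose $w$ lies in the right-hand intersection but $w\notin\mathscr{A}(V)$. Lemma~\ref{lemmaA} supplies, for each $i\in\{1,\ldots,l\}$, an index $j(i)\in S_i$ with $w\notin\mathscr{A}(V_{g_{j(i)}^{(i)}})$. Setting $\sigma_w(i):=j(i)$ and using the identity $\mathscr{A}(V_{h_{\sigma_w}})=\bigcup_{i=1}^{l}\mathscr{A}(V_{g_{\sigma_w(i)}^{(i)}})$ (the amoeba of a reducible hypersurface is the union of the amoebas of its components), we conclude $w\notin\mathscr{A}(V_{h_{\sigma_w}})$, a contradiction. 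The main technical obstacle is the explicit determinant computation in (i) — checking the invertibility of $J+D$ — since only then can one convert the combinatorial statement ``some column of $B(u)$ is full'' into the geometric statement $u=v^{(i_0)}$; the rest is the bipartite bookkeeping on $B(u)$ together with a direct invocation of Lemma~\ref{lemmaA}.
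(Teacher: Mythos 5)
Your construction of the auxiliary polynomials (transversal products $h_\sigma=\prod_i g_{\sigma(i)}^{(i)}$) and your proof of part (ii) via Lemma~\ref{lemmaA} and the fact that the amoeba of a product is the union of the factors' amoebas coincide with the paper's argument. Where you genuinely diverge is part (i): the paper proves that the solution set $V'$ of the enlarged system equals $V$ by a degree/solution-count argument (every equation of the system has degree $\mu$, so the number of solutions counted with multiplicity cannot exceed $\mu$, and $V\subset V'$ forces equality), whereas you argue combinatorially that vanishing of all transversals at $u$ forces a ``full column'' $i_0$ with $g_j^{(i_0)}(u)=0$ for every $j$, and then pin down $u=v^{(i_0)}$ by checking that the linear parts of $g_1^{(i_0)},\ldots,g_n^{(i_0)}$ form an invertible matrix; your rescaling to $J+D$ and the value $\det(J+D)=(-1)^n(1+||v^{(i_0)}||_0)^{n-1}/\prod_j|v_j^{(i_0)}|$ are correct. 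This buys you a self-contained, fully explicit verification of (i) that avoids the paper's somewhat delicate appeal to a B\'ezout-type bound for an overdetermined system. The trade-off is in the size of the family: you enumerate all $(n+1)^l$ transversals, while the paper restricts to transversals with $j(i)\in S_i$ and counts at most $n^{l-1}(n+1)$ of them; the lemma only asks for finiteness, so your proof is complete as stated, but if one wants the bound $(n+1)(n^{\mu-1}+1)$ claimed in Theorem~\ref{MainResult} one must use the paper's more economical selection (or prune your family accordingly) rather than the full set of transversals.
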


\begin{proof}
Let  $w\in \bigcap_{j=0}^n\mathscr{A}(V_{g_j})$, then for any $j\in\{ 0,1,\ldots ,n\}$ there exists $i(j)\in \{1,\ldots ,l\}$ with $w\in \mathscr{A}(V_{g_j^{(i(j))}})$, i.e., $w\in \bigcap_{j=0}^n \mathscr{A}(V_{g_j^{(i(j))}})$. On the other hand,  suppose that $w$ is contained in the complement of the amoeba $\mathscr{A}(V)$. Hence, for any $i\in\{ 1,\ldots l\}$ there exists a subset $S_i\subset \{ 0,1,\ldots n\}$
 such that $w\notin  \mathscr{A}(V_{g_j^{(i)}})$ for all $j\in S_i$. 
 Consider the set of all arrangement of $l$ hyperplane amoebas where each of these arrangement is defined by the product of the first order polynomials  as follows: 
$$
h_{j(1),\ldots ,j(l)}(z) := g_{j(1)}^{(1)}(z)g_{j(2)}^{(2)}(z)\ldots g_{j(l)}^{(l)}(z),
$$  
 where $j(i)\in S_i$  for all $i\in\{ 1,\ldots l\}$. We can check that the number of these arrangements cannot exceed $n^{l-1}(n+1)$.  By construction, $V\subset V_{h_{j(1),\ldots ,j(l)}}$, and $w\notin  \mathscr{A}(V_{h_{j(1),\ldots ,j(l)}})$ for all index $(j(1),\ldots ,j(l))$. We claim that the intersection
 $$
 \mathscr{B} = (\bigcap_{j=0}^n\mathscr{A}(V_{g_j}))\bigcap  (\bigcap_{j(1),\ldots ,j(l)} \mathscr{A}(V_{h_{j(1),\ldots ,j(l)}}))
 $$ 
 is precisely the amoeba of $V$. Indeed, let $w \in  \mathscr{B}$ and assume $w\notin \mathscr{A}(V)$. Then for each $i\in\{ 1,\ldots ,l\}$, there exists  a nonempty subset  $S_i\subset \{ 0,1,\ldots ,n\}$ such that $w\notin \mathscr{A}(V_{g_{j(i)}^{(i)}})$ for all $j(i)\in S_i$. Otherwise,  using Proposition \ref{FPT}, the point $w$ will be equal to one of the $w^{(i)}$'s.
 Namely, there exists an index $(j(1),\ldots ,j(l))$ such that  $w$ is not in the amoeba of $V_{h_{j(1),\ldots ,j(l)}}$ (because we took all the arrangement in the intersection $ \mathscr{B}$). This is in contradiction with the fact that $w \in  \mathscr{B}$. 
The other inclusion is given by construction.
It remains to show that $V$ is equal to the set of  solutions  $V'$ of the system  $(\mathscr{S}')$ defined by 
$$
\{ g_j(z) =0\}_{j=0}^n\bigcap\{ h_{j(1),\ldots ,j(l)}(z)=0\}_{(j(1),\ldots ,j(l))}.
$$
By construction, we have the first inclusion $V\subset V'$. Without loss of generality, we can assume $l=\mu$. The number of solutions  $\nu$ (counted with multiplicity) of the system $(\mathscr{S}')$ cannot exceed $\mu$ (observe that the degree of any  equation in $(\mathscr{S}')$ is equal to $\mu$). As we know  that $V\subset V'$, then  $\nu$ is also at least equal to $\mu$. Hence, $V'$ should be equal to $V$.  
\end{proof}

\noindent {\it Proof of Theorem \ref{MainResult}}. If $l>1$, then the number of arrangements of $l$ hyperplanes amoebas cannot exceed $n^{l-1}(n+1)$ which satisfies the inequality $ n^{l-1}(n+1)\leq  n^{\mu -1}(n+1)$.  If $l=1$, then 
as we seen before, using Proposition \ref{FPT}, the set of polynomials  $\{ g_j\}_{j=0}^n$ is an amoeba basis of $\mathscr{A}(V)$. Now, Theorem \ref{MainResult}
  is an immediate consequence the last remark and  Lemma \ref{lemmaB}.

\vspace{0.3cm}
  
\noindent {\bf Example.} Let  $f_1(z)=f_2(z)=0$ be  a generic system of two polynomials in two complex variables. Let $v^{(1)} ,v^{(2)}$ be the set of solutions   of this system (for example the intersection of a line with a parabola).
Assume that   $||v^{(1)}||_0\ne ||v^{(2)}||_0$   and  $\mu =l=2$. Consider the system $(\mathscr{G})$ defined as follows:
 \begin{displaymath}
 \begin{array}{cccccc}
g_0(z) & = & g_0^{(1)}(z)&g_0^{(2)}(z)&=&0 \\
g_1(z) & = & g_1^{(1)}(z)&g_1^{(2)}(z)&=&0 \\
g_2(z) & = & g_2^{(1)}(z)&g_2^{(2)}(z)&=&0. 
\end{array} 
\end{displaymath}
Where $g_j^{(i)}$ for $j=0, 1, 2$ and $i=1,2$ are defined as before. We add the $(\mathscr{G})$ the following polynomial equations: $g_j^{(i)}g_r^{(s)}$ with $j\ne r$ and $i\ne s$. We can check that the system
$$
\mathscr{B} =  \{ g_j^{(i)}g_r^{(s)}\}_{j, r=0,1,2\, \textrm{and}\, i, s=1,2}
$$
is an amoeba basis of the set of solutions to  the system $f_1(z)=f_2(z)=0$ of length 9 and each of its element is of degree 2.

%%%%%%%%%%%%%%%%%%%%%%%%%%%%%
%%%%%%%%%%%%%%%%%%%%%%%%%%%%%
%%%%%%%%%%%%%%%%%%%%%%%%%%%%%
%%%%%%%%%%%%%%%%%%%%%%%%%%%%%

\end{document}